\DeclareMathOperator{\sech}{sech}
\begin{document}

%

\markboth{Yogesh J. Bagul, Ramkrishna M. Dhaigude, Barkat A. Bhayo}{Wilker and Huygens type inequalities for mixed trigonometric-hyperbolic functions}

\title{Wilker and Huygens type inequalities for mixed trigonometric-hyperbolic functions}

\author{Yogesh J. Bagul $^{1}$, Ramkrishna M. Dhaigude$^{2}$, Barkat A. Bhayo\coraut$^{3}$, Vinay M. Raut$^{4}$ }

\address{ $^{1}$Department of Mathematics,\\ K. K. M. College, Manwath,\\ Dist: Parbhani(M. S.) - 431505, India\\
$^{2}$Department of Mathematics,\\ Government Vidarbha Institute of Science\\ and Humanities, Amravati(M. S.)-444604, India\\
$^{*3}$Department of Mathematics, Sukkur IBA University,\\
 Sindh, Pakistan\\
 $^{4}$Department of Mathematics, \\
Shri. Shivaji Science College, Amravati(M. S.)-444603, India\\
	}
\emails{yjbagul@gmail.com, rmdhaigude@gmail.com, barkat.bhayo@iba-suk.edu.pk, vinayraut18@gmail.com}

\maketitle
\begin{abstract}
Motivated by the work of S\'{a}ndor \cite{sandor},  
in this paper we  establish a new Wilker type and Huygens type inequalities involving the trigonometric and hyperbolic functions. Moreover, in terms of hyperbolic functions, the upper and lower bounds of $\sin(x)/x$ and $\tan(x)/x$ are given.  
\end{abstract}

\subjclass{26A09, 26D05, 26D20, 33B10}  
\keywords{Wilker-type inequality, Huygens-type inequality, trigonometric-hyperbolic functions.}        


\vspace{10pt}

\section{Introduction}\label{sec1}
In the last two decades, the refinements of the inequalities involving trigonometric and hyperbolic functions such as Wilker type inequalities and Huygens type inequalities have been studied by several authors e.g., see \cite{bercu, bhayo, chu, guo, klen,neuman1, sandor, wus,  zhang, zhu} and the references therein. Motivated by the work of S\'andor \cite{sandor}, and above studies, 
in this paper we make a contribution to the subject by establishing a new Wilker type and Huygens type inequalities involving the trigonometric and hyperbolic functions. In all cases, we
give the upper and lower bounds of $\sin(x)/x$ and $\tan(x)/x$ in terms of elementary functions.
        
For $0<x<\pi/2$, Wilker \cite{wilker} and Huygens \cite{huygens} proposed the following inequalities
\begin{align}\label{eqn1.1}
\left(\frac{\sin x}{x}\right)^2 + \frac{\tan x}{x} > 2,
\end{align}
\begin{align}\label{eqn1.5}
2 \frac{\sin x}{x} + \frac{\tan x}{x} > 3,
\end{align}
respectively. In literature, inequality (\ref{eqn1.1}) and (\ref{eqn1.5}) are known as 
 Wilker's and Huygens' inequalities, respectively.

In \cite{zhu}, Zhu proved the the hyperbolic version of (\ref{eqn1.1}) as follows, 
\begin{align}\label{eqn1.3}
\left(\frac{\sinh x}{x}\right)^2 + \frac{\tanh x}{x} > 2,\qquad x>0,
\end{align}
Similarly, Neuman and S\'andor \cite{neuman} established  
\begin{align}\label{eqn1.7}
2 \frac{\sinh x}{x} + \frac{\tanh x}{x} >  3 \quad x>0,
\end{align}
as a hyperbolic version of \eqref{eqn1.5}.

Recently, Wu and Srivastava \cite{wu} proved that for $0 < x < \pi/2$
\begin{align}\label{eqn1.2}
\left(\frac{x}{\sin x}\right)^2 + \frac{x}{\tan x} > 2,
\end{align}
and the following hyperbolic version of \eqref{eqn1.2}
\begin{align}\label{eqn1.4}
\left(\frac{x}{\sinh x}\right)^2 + \frac{x}{\tanh x} > 2 \quad (x > 0),
\end{align}
was proved by Wu and Debnath in \cite{wus}.

For $0<x<\pi/2$, the following relations were established by Neuman and S\'andor
\begin{align}\label{eqn1.6}
2 \frac{\sin x}{x} + \frac{\tan x}{x} > 2 \frac{x}{\sin x} + \frac{x}{\tan x} > 3,
\end{align}

\begin{align}\label{eqn1.7}
2 \frac{\sinh x}{x} + \frac{\tanh x}{x} > 2 \frac{x}{\sinh x} + \frac{x}{\tanh x} > 3,
\end{align}
in \cite{neuman} and \cite{neuman1}, respectively. For $0 < x < \pi/2$, one can write that  
\begin{align}\label{eqn1.8}
\frac{\sin x}{x} + 2 \frac{\tan x}{x} > 2 \frac{\sin x}{x} + \frac{\tan x}{x},
\end{align}
because $\cos x<1$.

For the extensions, generalizations, refinements and alternative proofs of the above inequalities, the reader is referred to see, e.g., \cite{bercu, bhayo, neuman, neuman1, wu, wus, zhu, chu, matejicka, zhu1, mhanna, zhang, rasajski, mortici, nantomah, guo}. Inequality (\ref{eqn1.1}) and (\ref{eqn1.5}) are known in the literature as Wilker's and Huygens inequalities respectively. Rest of the inequalities are known to be Wilker-type and Huygens-type inequalities for either trigonometric functions or hyperbolic functions. In contrast to this, S\'{a}ndor 
\cite{sandor} established the following inequalities for $0 < x < \pi/2$,
\begin{align}\label{eqn1.9}
\left(\frac{\sinh x}{x}\right)^2 + \frac{\sin x}{x} > 2,
\end{align}
\begin{align}\label{eqn1.10}
\left(\frac{x}{\sin x}\right)^2 + \frac{x}{\sinh x} > 2,
\end{align}
\begin{align}\label{eqn1.11}
2 \frac{\sinh x}{x} + \frac{\sin x}{x} > 3,
\end{align}
and
\begin{align}\label{eqn1.12}
2\frac{x}{\sin x} + \frac{x}{\sinh x} > 3.
\end{align}
In \cite{sandor}, the inequalities (\ref{eqn1.9}), (\ref{eqn1.10}) are called  as trigonometric-hyperbolic Wilker type inequalities and the inequalities (\ref{eqn1.11}), (\ref{eqn1.12}) are called as trigonometric-hyperbolic Huygens type inequalities. The main purpose of this paper is to obtain some trigonometric-hyperbolic Wilker type and Huygens type inequalities. 

The paper is organized as follows. In this section, we give an introduction and highlight the concerned previous inequalities
together with the statements of main results in the form of theorems. Section 2 consists of preliminary ancestor and lemmas, which will be used in the proving procedures sequel. Section 3 closes the paper by proving the main results from Section 1.


Theorems \ref{thm1}-\ref{thm2} and 
Theorems \ref{thm3}-\ref{thm4} are dealing with Wilker-type inequalities and Huygens-type inequalities, respectively. 

Our first main result reads as follows.

\begin{theorem}\label{thm1}
For $ 0 < x < \pi/2 $ it is asserted that
\begin{align}\label{eqn3.1}
2 - \frac{x^2}{2} < \frac{x}{\sin x} + \left(\frac{\tanh x}{x}\right)^2  < 2 
< 2 + \frac{x^4}{180} < \left(\frac{\sin x}{x}\right)^2 + \frac{x}{\tanh x}.
\end{align}
\end{theorem}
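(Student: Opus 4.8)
The plan is to split the chain \eqref{eqn3.1} into its four links and treat them separately. The central link $2<2+x^4/180$ is immediate for $x>0$, so three genuine inequalities remain: the lower bound $2-x^2/2<A$ and the upper bound $A<2$ for $A:=\dfrac{x}{\sin x}+\bigl(\tfrac{\tanh x}{x}\bigr)^2$, together with $2+x^4/180<B$ for $B:=\bigl(\tfrac{\sin x}{x}\bigr)^2+\dfrac{x}{\tanh x}$. A first orienting step is to record the Maclaurin expansions
\[
A=2-\frac{x^2}{2}+\frac{143}{360}x^4-\cdots,\qquad B=2+\frac{1}{45}x^4-\cdots,
\]
obtained from $\frac{x}{\sin x}=1+\frac{x^2}{6}+\frac{7x^4}{360}+\cdots$, from $\frac{x}{\tanh x}=1+\frac{x^2}{3}-\frac{x^4}{45}+\cdots$, and from squaring the series for $\tanh x/x$ and $\sin x/x$. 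These confirm that each stated polynomial bound matches the correct leading-order behaviour, and since $\tfrac1{45}=\tfrac4{180}$ the margin in the rightmost link is genuinely positive near $0$.

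For each genuine inequality I would form the difference function, clear denominators by multiplying through by a manifestly positive factor, and reduce the claim to the positivity of an entire function. For the two one-sided estimates governed by their leading term, namely $f_1:=A-(2-x^2/2)$ and $f_3:=B-2-x^4/180$, multiplying $f_1$ by $x^2\sin x\cosh^2 x>0$ and $f_3$ by $x^2\sinh x>0$ produces entire combinations of $\sin,\cos,\sinh,\cosh$ and powers of $x$. I would then expand these in Maclaurin series and show that, after the first non-vanishing term, the coefficients keep a fixed sign (or that consecutive terms pair into nonnegative blocks), so that the whole expression is positive on $(0,\pi/2)$. The auxiliary lemmas of Section~2, in particular the monotonicity of a ratio of power series and the monotone form of l'Hospital's rule, are the natural tools for converting such coefficient information into the stated inequalities.

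The delicate case is the upper bound $A<2$. Writing it as $\dfrac{x-\sin x}{\sin x}<\dfrac{x^2-\tanh^2 x}{x^2}$ shows that both sides vanish to second order at $0$ with leading coefficients in ratio $\tfrac14$, so the inequality is comfortable near the origin; but a numerical check gives $A(\pi/2^-)\approx1.91$, so the gap shrinks towards the right endpoint and no crude truncation survives there. Moreover $A$ itself is not monotone, since $\frac{x}{\sin x}$ is increasing while $(\tanh x/x)^2$ is decreasing, so the sign of $A-2$ cannot be read off from an endpoint value of $A$. I therefore expect to study the quotient $\dfrac{(x/\sin x)-1}{1-(\tanh x/x)^2}$ and to show it stays below $1$ by establishing its monotonicity and evaluating the boundary value at $\pi/2$. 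The obstruction here is that the denominator series $1-(\tanh x/x)^2=\tfrac23x^2-\tfrac{17}{45}x^4+\cdots$ changes sign in its coefficients, so the ratio lemma does not apply verbatim and one must instead clear denominators and argue directly on the resulting entire function.

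The main obstacle throughout is exactly this mismatch of sign patterns: the trigonometric factors ($x/\sin x$, $x/\tanh x$) and the hyperbolic ones enter with opposing alternation, so term-by-term comparison after clearing denominators does not immediately yield a one-signed series. My expectation is that grouping the Maclaurin coefficients into consecutive pairs and using $0<x<\pi/2$ to bound the tail of each pair will render every block nonnegative and thereby close all three nontrivial links, with the upper bound $A<2$ demanding the most care because its positive margin is the smallest.
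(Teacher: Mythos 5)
Your decomposition of the chain and your leading-order expansions are all correct (the coefficients $-\tfrac12$, $\tfrac{143}{360}$ and $\tfrac1{45}$ check out, and your observation that $A$ is not monotone while $A(\pi/2^-)\approx 1.91$ is accurate), but the proposal stops exactly where the proof has to begin: every one of the three nontrivial links is reduced to a claim about the sign pattern of Maclaurin coefficients that you state as an \emph{expectation} rather than prove. The paper's entire technical content lies in controlling those signs, and it does so through the Bernoulli numbers: writing $\frac{x}{\sin x}$ and $\left(\frac{\tanh x}{x}\right)^2$ via \eqref{eqn2.3} and \eqref{eqn2.8} gives $A=2+\sum_{k\ge1}a_kx^{2k}$ with
$a_k=\frac{2(2^{2k-1}-1)}{(2k)!}\vert B_{2k}\vert-\frac{2^{2k+4}(2k+3)(2^{2k+4}-1)}{(2k+4)!}B_{2k+4}$,
and the classical two-sided estimate $\frac{2(2k)!}{(2\pi)^{2k}}<\vert B_{2k}\vert<\frac{2(2k)!}{(2\pi)^{2k}}\cdot\frac{2^{2k-1}}{2^{2k-1}-1}$ (Lemma~\ref{lem4}) is what forces $a_k<0$ for odd $k$ and $a_k>0$ for even $k$; both $2-\frac{x^2}{2}<A$ and $A<2$ then drop out of a single alternating-series truncation. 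You treat $A<2$ as a separate ``delicate case'' needing a quotient argument, and you invoke a ratio-of-power-series lemma and a monotone l'Hospital rule that are not among the paper's lemmas; without the Bernoulli-number estimates (or some substitute), your plan of ``pairing consecutive coefficients into nonnegative blocks'' after multiplying by $x^2\sin x\cosh^2x$ is not something you have verified, and the mixed alternation you yourself flag as the main obstacle is precisely the point left unresolved.

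For the rightmost link the paper takes a much lighter route than the one you sketch: it does not expand $B$ at all, but simply inserts the two ready-made bounds $1-\frac{x^2}{6}<\frac{\sin x}{x}$ (Lemma~\ref{lem1}) and $1+\frac{5x^2}{x^2+15}<\frac{x}{\tanh x}$ (Lemma~\ref{lem2}), obtaining
\begin{equation*}
\left(\frac{\sin x}{x}\right)^2+\frac{x}{\tanh x}>2+\frac{x^4}{36}\cdot\frac{x^2+3}{x^2+15}>2+\frac{x^4}{36}\cdot\frac{x^2+3}{5(x^2+3)}=2+\frac{x^4}{180},
\end{equation*}
an entirely elementary computation. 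So the gap is concrete: you have correctly located the difficulty but supplied no mechanism for resolving it, whereas the paper resolves it with Lemma~\ref{lem4} on one side and with two borrowed polynomial/rational bounds on the other.
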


\begin{theorem}\label{thm2}
The inequalities
\begin{align}\label{eqn3.3}
\frac{\sinh x}{x} + \left(\frac{x}{\tan x}\right)^2 < 2 - \frac{x^2}{6} < 2 
< 2 + \frac{x^5}{45 \tan x} < \left(\frac{\sinh x}{x}\right)^2 + \frac{x}{\tan x}
\end{align}
hold for all $ x \in (0, \pi/2).$
\end{theorem}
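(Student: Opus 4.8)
The plan is to read the chain as four separate inequalities and to dispatch the two inner ones, $2-\frac{x^2}{6}<2$ and $2<2+\frac{x^5}{45\tan x}$, at once: on $(0,\pi/2)$ one has $\frac{x^2}{6}>0$ and $\tan x>0$, so $\frac{x^5}{45\tan x}>0$. The substance lies in the two outer estimates, which have opposite characters. The left inequality is loose (the discrepancy is already of order $x^2$), while the right one is tight (the leading terms cancel), so I would treat them by different methods.

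For the left inequality I would split the left-hand side and bound each summand separately. Since $\frac{x}{\tan x}=x\cot x=1-\sum_{n\ge1}\frac{2^{2n}|B_{2n}|}{(2n)!}x^{2n}$ has an all-negative tail, one gets $0<\frac{x}{\tan x}<1-\frac{x^2}{3}$ on $(0,\pi/2)$, and because $1-\frac{x^2}{3}>0$ there (as $x^2<3$) squaring is order-preserving, giving $\left(\frac{x}{\tan x}\right)^2<\left(1-\frac{x^2}{3}\right)^2$. It then suffices to prove the purely hyperbolic bound $\frac{\sinh x}{x}<1+\frac{x^2}{2}-\frac{x^4}{9}$, because the two upper bounds are engineered so that $\left(1-\frac{x^2}{3}\right)^2+\left(1+\frac{x^2}{2}-\frac{x^4}{9}\right)=2-\frac{x^2}{6}$ exactly. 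The hyperbolic bound reduces, after subtracting the Maclaurin series of $\sinh x$, to the positivity of $\frac{x^3}{3}-\frac{43}{360}x^5-\sum_{k\ge3}\frac{x^{2k+1}}{(2k+1)!}$, which I would confirm by noting that its dominant part $\frac{x^3}{3}\left(1-\frac{43}{120}x^2\right)$ is positive for $x^2<\frac{120}{43}$, in particular for $x^2<\pi^2/4$, and that it outweighs the remaining higher-order tail.

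For the right inequality the decisive observation is $\frac{x^5}{45\tan x}=\frac{x^4}{45}\cdot\frac{x}{\tan x}$, so the claim is equivalent to the positivity on $(0,\pi/2)$ of $A(x)=\left(\frac{\sinh x}{x}\right)^2+\frac{x}{\tan x}-2-\frac{x^4}{45}\cdot\frac{x}{\tan x}$. Writing $\left(\frac{\sinh x}{x}\right)^2=\sum_{k\ge0}d_kx^{2k}$ (all $d_k>0$, being the square of a positive-coefficient series) and $\frac{x}{\tan x}=1-\sum_{n\ge1}c_nx^{2n}$ with $c_n=\frac{2\zeta(2n)}{\pi^{2n}}>0$, I would expand $A$ and observe that the constant, $x^2$ and $x^4$ coefficients all vanish—this last cancellation is precisely what the factor $\frac{x^4}{45}$ produces—leaving $A(x)=\sum_{k\ge3}a_kx^{2k}$ with $a_k=d_k-c_k+\frac{c_{k-2}}{45}$. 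Positivity of every $a_k$, and hence of $A$, then follows from the Bernoulli-coefficient ratio estimate $\frac{c_k}{c_{k-2}}=\frac{1}{\pi^4}\cdot\frac{\zeta(2k)}{\zeta(2k-4)}<\frac{1}{\pi^4}<\frac{1}{45}$, which gives $\frac{c_{k-2}}{45}>c_k$ and so $a_k>d_k>0$.

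I expect the main obstacle to be this last step—controlling the entire family of Maclaurin coefficients of $A$—since all the slack in the right inequality lives in the coefficients beyond the cancelled ones, so no crude truncation can succeed. The ratio estimate rests on $c_n=\frac{2\zeta(2n)}{\pi^{2n}}$ together with the monotonicity $\zeta(2k)<\zeta(2k-4)$ of the zeta function; assembling this cleanly is the delicate point. If the Bernoulli-number bookkeeping proves awkward, I would instead avoid it entirely by applying the monotone form of L'Hôpital's rule from Section 2 to the ratio $N/D$, where $N=A+\frac{x^5}{45\tan x}=\left(\frac{\sinh x}{x}\right)^2+\frac{x}{\tan x}-2$ and $D=\frac{x^5}{45\tan x}$: both vanish to order $x^4$ at $0$ with $N/D\to1$, and showing $N'/D'$ is increasing would force $N/D>1$ throughout $(0,\pi/2)$.
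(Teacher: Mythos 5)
Your proposal is correct, and on both halves it takes a genuinely different route from the paper. For the left inequality the paper first uses $\left(\frac{x}{\tan x}\right)^2<\frac{x}{\tan x}$ to reduce to $\frac{\sinh x}{x}+\frac{x}{\tan x}$, then adds the series \eqref{eqn2.2} and \eqref{eqn2.4} and invokes Lemma \ref{lem6} to see that every coefficient past the constant is negative, so truncation at the $x^2$ term gives $2-\frac{x^2}{6}$; you instead bound the two summands separately by polynomials engineered to sum to $2-\frac{x^2}{6}$. Both work, but the paper's reduction is shorter and reuses its Bernoulli machinery, whereas your route avoids Bernoulli lemmas at the cost of the bound $\frac{\sinh x}{x}<1+\frac{x^2}{2}-\frac{x^4}{9}$, which is genuinely tight near $x=\pi/2$ (the two sides are about $1.465$ versus $1.557$ there), so the step ``it outweighs the remaining higher-order tail'' must be made explicit, e.g.\ by comparing $\frac{1}{3}\left(1-\frac{43}{120}x^2\right)$, decreasing in $x$, with a geometric majorant of the tail divided by $x^3$, increasing in $x$, and checking at $x=\pi/2$. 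For the right inequality the paper clears denominators, proves $\tan x\sinh^2x-2x^2\tan x+x^3>\frac{x^7}{45}$ by multiplying out the $\tan x$ and $\cosh 2x$ series, and reads off the first few positive coefficients, leaving the positivity of the remaining ones to a trailing ``$\cdots$''; your argument --- writing $c_n=\frac{2\zeta(2n)}{\pi^{2n}}$, observing the triple cancellation produced by the factor $\frac{x^4}{45}=c_2x^4$, and using $\frac{c_k}{c_{k-2}}<\frac{1}{\pi^4}<\frac{1}{45}$ to force $a_k>d_k>0$ for every $k\ge 3$ --- is actually the more complete of the two, since it controls all coefficients at once. One small caveat: your fallback via a ``monotone form of L'H\^opital's rule from Section 2'' refers to a lemma this paper does not contain (Section \ref{sec2} has only series expansions and Bernoulli estimates), but your primary argument does not need it.
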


\begin{theorem}\label{thm3}
Let $ 0 < x < \pi/2. $ Then the following inequalities hold:
\begin{align}\label{eqn3.9}
2 \frac{\sinh x}{x} + \frac{x}{\tan x} < 3 - \frac{x^4}{180} < 3 
< 3 +  \frac{31}{180}x^4 < 2 \frac{x}{\sinh x} + \frac{\tan x}{x}
\end{align}
and
\begin{align}\label{eqn3.10}
 \frac{\sinh x}{x} + 2 \frac{x}{\tan x} < 3 - \frac{x^2}{2} < 3 
< 3 + \frac{x^2}{2} <  \frac{x}{\sinh x} + 2 \frac{\tan x}{x}.
\end{align}
\end{theorem}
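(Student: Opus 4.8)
The plan is to prove only the four substantive inequalities in \eqref{eqn3.9} and \eqref{eqn3.10}; the two inner estimates in each chain (of the form $3-(\,\cdot\,)<3<3+(\,\cdot\,)$ with a positive power of $x$) are immediate for $0<x<\pi/2$. So the task reduces to showing, on $(0,\pi/2)$,
\begin{align*}
2\frac{\sinh x}{x}+\frac{x}{\tan x}<3-\frac{x^4}{180},\qquad 2\frac{x}{\sinh x}+\frac{\tan x}{x}>3+\frac{31}{180}x^4,
\end{align*}
and
\begin{align*}
\frac{\sinh x}{x}+2\frac{x}{\tan x}<3-\frac{x^2}{2},\qquad \frac{x}{\sinh x}+2\frac{\tan x}{x}>3+\frac{x^2}{2}.
\end{align*}

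My approach is to compare each left-hand side with its Maclaurin series, all of which converge on $(0,\pi/2)$: $\frac{\sinh x}{x}=\sum_{n\ge0}\frac{x^{2n}}{(2n+1)!}$, $\frac{x}{\sinh x}=\sum_{n\ge0}\frac{(2-2^{2n})B_{2n}}{(2n)!}x^{2n}$, $\frac{x}{\tan x}=1-\sum_{n\ge1}\frac{2^{2n}|B_{2n}|}{(2n)!}x^{2n}$, and $\frac{\tan x}{x}=\sum_{n\ge0}\frac{2^{2n+2}(2^{2n+2}-1)|B_{2n+2}|}{(2n+2)!}x^{2n}$. In every one of the four combinations the constant term equals $3$, and a direct check shows that the designated polynomial correction matches the series exactly through the first nonzero order ($x^2$ in \eqref{eqn3.10}, $x^4$ in \eqref{eqn3.9}); this is precisely what fixes the constants $\tfrac12$, $\tfrac1{180}$ and $\tfrac{31}{180}$. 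Hence each inequality is equivalent to a statement about the sign of the remaining tail series.

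For the two upper (``$<$'') inequalities the combination involves only $\frac{\sinh x}{x}$, which has positive coefficients, and $\frac{x}{\tan x}$, whose coefficients are nonpositive beyond the constant term. Consequently every tail coefficient is $\le 0$, so after the leading matched term is removed the difference is negative term by term; the elementary Bernoulli estimate $\frac{2}{2n+1}\le 2^{2n}|B_{2n}|$ for $n\ge2$ (and its analogue carrying a factor $2$) makes this rigorous. For the two lower (``$>$'') inequalities the obstacle is that $\frac{x}{\sinh x}$ has coefficients of sign $(-1)^n$, so at each odd degree $2n$ the hyperbolic contribution is negative and must be outweighed by the positive tangent contribution of the same degree. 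Using $|B_{2n}|=\frac{2\,(2n)!}{(2\pi)^{2n}}\zeta(2n)$, this domination reduces to the clean bound $\frac{\zeta(2n+2)}{\zeta(2n)}>\frac{6}{\pi^2}$ (valid since $\zeta(2n)\le\zeta(2)$ and $\zeta(2n+2)>1$) combined with the exponential gap between $2^{2n+2}$ and $2^{2n}$; conceptually, the tangent series has radius of convergence $\pi/2$ whereas $\frac{x}{\sinh x}$ has radius $\pi$, so the tangent coefficients dominate. This forces every tail coefficient to be positive, and hence the difference is positive on $(0,\pi/2)$.

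The main obstacle is exactly this coefficient comparison for the two lower inequalities, where the alternating signs of $\frac{x}{\sinh x}$ prevent a naive termwise argument and one must quantify the growth of $|B_{2n}|$. A more computational alternative, likely closer to the route suggested by the lemmas of Section~2, is to replace each of the four functions by a two-sided polynomial bound and reduce every inequality to a single polynomial inequality on $(0,\pi/2)$, checking its sign via derivatives; note also that near $x=\pi/2$ the lower inequalities are trivial because $\tan x\to\infty$, so only the behaviour for small $x$ is delicate.
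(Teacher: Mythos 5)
Your proposal is correct and follows essentially the same route as the paper: expand each combination in its Maclaurin series (constant term $3$), match the leading correction to fix the constants $\tfrac12$, $\tfrac1{180}$, $\tfrac{31}{180}$, and settle the sign of every tail coefficient via Bernoulli-number estimates --- your bound $2^{2n}|B_{2n}|\ge \tfrac{2}{2n+1}$ is the paper's Lemma~\ref{lem5}, and your $\zeta$-ratio domination for the alternating-sign terms of $x/\sinh x$ is exactly how the paper obtains its Lemma~\ref{lem7} from the standard two-sided bound on $|B_{2k}|$.
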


\begin{theorem}\label{thm4}
If $ 0 < x < \pi/2 $ then the following inequalities hold:
\begin{align}\label{eqn3.11}
3 - \frac{x^4}{180} < 2 \frac{\sin x}{x} + \frac{x}{\tanh x} < 3 
< 2 \frac{x}{\sin x} + \frac{\tanh x}{x} < 3 + \frac{31}{180}x^4,
\end{align}
and
\begin{align}\label{eqn3.12}
3 - \frac{x^2}{2} < \frac{x}{\sin x} + 2 \frac{\tanh x}{x} < 3 
< \frac{\sin x}{x} + 2 \frac{x}{\tanh x} < 3 + \frac{x^2}{2}.
\end{align}
\end{theorem}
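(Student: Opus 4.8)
The plan is to read \eqref{eqn3.11}--\eqref{eqn3.12} as eight one-sided inequalities and to prove each by comparing the mixed expression with its Maclaurin development. First I would record the even power series
\[
\frac{\sin x}{x}=1-\frac{x^{2}}{6}+\frac{x^{4}}{120}-\cdots,\qquad
\frac{x}{\sin x}=1+\frac{x^{2}}{6}+\frac{7x^{4}}{360}+\cdots,
\]
\[
\frac{\tanh x}{x}=1-\frac{x^{2}}{3}+\frac{2x^{4}}{15}-\cdots,\qquad
\frac{x}{\tanh x}=x\coth x=1+\frac{x^{2}}{3}-\frac{x^{4}}{45}+\cdots .
\]
Substituting these gives, for instance,
\[
2\frac{\sin x}{x}+\frac{x}{\tanh x}=3-\frac{x^{4}}{180}+\frac{13}{7560}x^{6}-\cdots,\qquad
\frac{x}{\sin x}+2\frac{\tanh x}{x}=3-\frac{x^{2}}{2}+\frac{103}{360}x^{4}-\cdots,
\]
and similarly for the two remaining sums, so that every polynomial threshold appearing in the theorem is exactly the leading truncation of the corresponding expansion. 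This pins down the constants and tells me, in each case, the sign of the first nonzero tail term that must be controlled.

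For the two central bounds in which the expression exceeds $3$, namely $2\frac{x}{\sin x}+\frac{\tanh x}{x}>3$ in \eqref{eqn3.11} and $\frac{\sin x}{x}+2\frac{x}{\tanh x}>3$ in \eqref{eqn3.12}, I would invoke the weighted AM--GM inequality. Writing $a=\sin x/x$ and $b=\tanh x/x$, one has
\[
2\frac{x}{\sin x}+\frac{\tanh x}{x}=\frac1a+\frac1a+b\ge 3\sqrt[3]{\frac{b}{a^{2}}},\qquad
\frac{\sin x}{x}+2\frac{x}{\tanh x}=a+\frac1b+\frac1b\ge 3\sqrt[3]{\frac{a}{b^{2}}},
\]
with strict inequality since $ab<1$ on $(0,\pi/2)$. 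Hence these two bounds reduce to the single-function sub-inequalities $(\sin x/x)^{2}<\tanh x/x$ and $(\tanh x/x)^{2}<\sin x/x$, whose Maclaurin differences start at order $x^{4}$ and $x^{2}$ with the correct sign; each I would establish on $(0,\pi/2)$ by the monotone form of l'H\^{o}pital's rule or by examining the sign of the relevant quotient's derivative.

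The remaining six inequalities, namely the two central bounds in which the expression is smaller than $3$ together with the four polynomial refinements $3\pm(\text{poly})$, do not follow from AM--GM, and I would treat them directly. For each I would set $f(x)$ equal to the difference between the two sides, note that $f(0^{+})=0$ together with the vanishing of enough derivatives at $0$, and then prove that $f$ keeps a fixed sign by showing a suitable derivative is monotone, iterating the monotone l'H\^{o}pital rule from Section~2 as many times as the order of the leading tail term requires. The refinements in \eqref{eqn3.11} are the most delicate, since there the difference starts only at order $x^{6}$, so the argument must pin down terms through that order and then dominate the entire tail.

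I expect the main obstacle to be global control on all of $(0,\pi/2)$ rather than the behaviour near $0$. Because each sum mixes a trigonometric series (alternating) with a hyperbolic one (Bernoulli coefficients), the combined tail carries no uniform sign, so term-by-term positivity fails; moreover near $x=\pi/2$ the function and its polynomial bound are of comparable size, which rules out a crude truncation estimate and forces a genuine monotonicity argument across the whole interval. Finally, I would note the structural shortcut that the substitution $x\mapsto ix$ carries each expression in \eqref{eqn3.11} into the companion expression of Theorem~\ref{thm3}, which explains the parallel constants $\tfrac{1}{180}$ and $\tfrac{31}{180}$; but since it flips the sign of every second Maclaurin coefficient it does not transfer the inequalities directly, so the analysis above must still be carried out in full.
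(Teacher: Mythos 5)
Your Maclaurin expansions and the constants extracted from them are correct, and your AM--GM reduction of the two central lower bounds ($2\frac{x}{\sin x}+\frac{\tanh x}{x}>3$ and $\frac{\sin x}{x}+2\frac{x}{\tanh x}>3$) to the single-function inequalities $\left(\frac{\sin x}{x}\right)^2<\frac{\tanh x}{x}$ and $\left(\frac{\tanh x}{x}\right)^2<\frac{\sin x}{x}$ is a genuinely different and attractive idea; the first of these is exactly the inequality $\frac{x}{\tanh x}<\left(\frac{x}{\sin x}\right)^2$ that the paper quotes from the literature in the proof of Lemma \ref{lem3}, so that piece could be closed by citation. However, for the other six inequalities --- the two central upper bounds and all four polynomial refinements, which are the substance of the theorem --- what you offer is a plan rather than a proof: ``iterate the monotone l'H\^{o}pital rule as many times as the order of the leading term requires'' is never carried out, no such rule appears in the paper's Section~2, and for the refinements in \eqref{eqn3.11} this would mean pinning down the sign of a sixth-order phenomenon for $2\frac{x}{\sin x}+\frac{\tanh x}{x}-3-\frac{31}{180}x^4$ on all of $(0,\pi/2)$, which you neither do nor show to be tractable.

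More importantly, the diagnosis that steers you away from the series method is backwards. You assert that the combined tail ``carries no uniform sign, so term-by-term positivity fails,'' and that this ``rules out a crude truncation estimate.'' The paper's proof is precisely a truncation estimate: writing $2\frac{\sin x}{x}+\frac{x}{\tanh x}=3+\sum_{k\ge2}d_kx^{2k}$ with $d_k=\frac{2(-1)^k}{(2k)!}\left[\frac{1}{2k+1}-2^{2k-1}\vert B_{2k}\vert\right]$, the bracket is negative for every $k\ge2$ by the Bernoulli-number bound of Lemma \ref{lem5}, so the coefficients alternate with a negative leading term $-\frac{x^4}{180}$, and the alternating-series bracketing of partial sums delivers $3-\frac{x^4}{180}<2\frac{\sin x}{x}+\frac{x}{\tanh x}<3$ in one stroke; the companion expansion of $2\frac{x}{\sin x}+\frac{\tanh x}{x}$ is treated identically, with the alternation of its coefficients $e_k$ certified by the ratio estimate of Lemma \ref{lem7}. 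The key structural fact you missed is that after pairing the $x^{2k}$ terms of the alternating trigonometric series with those of the one-signed hyperbolic series, the combined coefficients follow a single, provable alternating pattern, and this is exactly what makes both the central bounds and the polynomial refinements fall out simultaneously. Without that observation, or a completed monotonicity argument in its place, six of the eight inequalities remain unproved.
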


\section{Preliminaries and Lemmas}\label{sec2}

In this section, we give a few series expansion formulas and lemmas, which will be used in the proofs of our main result.

For the following series expansions, we refer to \cite[1.411]{grad}.
\begin{align}\label{eqn2.1}
\cos x = \sum_{k=0}^{\infty}  \frac{(-1)^k}{(2k)!}x^{2k}, \ \ \cosh x = \sum_{k=0}^{\infty} \frac{x^{2k}}{(2k)!},
\end{align}
\begin{align}\label{eqn2.2}
\frac{\sin x}{x} = \sum_{k=0}^{\infty} \frac{(-1)^k}{(2k+1)!}x^{2k}, \ \ \frac{\sinh x}{x} = \sum_{k=0}^{\infty} \frac{x^{2k}}{(2k+1)!}, \ \ x \neq 0,
\end{align}
\begin{align}\label{eqn2.3}
\frac{x}{\sin x} =  1 + \sum_{k=1}^{\infty} \frac{2(2^{2k-1}-1)}{(2k)!}\vert B_{2k} \vert x^{2k} , \ \ \vert x \vert < \pi,
\end{align}

\begin{align}\label{eqn2.31}
\frac{x}{\sinh x} =  1 - \sum_{k=1}^{\infty} \frac{2(2^{2k-1}-1)}{(2k)!} B_{2k} x^{2k} , \ \ \vert x \vert < \pi,
\end{align}

\begin{align}\label{eqn2.4}
x \cot x = 1 - \sum_{k=1}^{\infty}\frac{2^{2k} \vert B_{2k} \vert}{(2k)!} x^{2k}, \ \ \vert x \vert < \pi, 
\end{align}
\begin{align}\label{eqn2.5}
x \coth x = 1 + \sum_{k=1}^{\infty}\frac{2^{2k}  B_{2k} }{(2k)!} x^{2k}, \ \ \vert x \vert < \pi, 
\end{align}  
\begin{equation}\label{eqn2.6}
\left.
\begin{split}
\tan x &= \sum_{k=1}^{\infty} \frac{2^{2k}(2^{2k}-1)}{(2k)!}\vert B_{2k} \vert x^{2k-1}, \ \ \vert x \vert < \pi/2 \\
&= x + \frac{x^3}{3} + \frac{2 x^5}{15} + \frac{17 x^7}{315} + \frac{62 x^9}{2835} + \cdots
\end{split}
\right\}
\end{equation}   

\begin{equation}\label{eqn2.7}
\left.
\begin{split}
\tanh x &= \sum_{k=1}^{\infty} \frac{2^{2k}(2^{2k}-1)}{(2k)!} B_{2k} x^{2k-1}, \ \ \vert x \vert < \pi/2 \\
&= x - \frac{x^3}{3} + \frac{2 x^5}{15} - \frac{17 x^7}{315} + \frac{62 x^9}{2835} - \cdots
\end{split}
\right\}
\end{equation} 
Differentiating (\ref{eqn2.7}) we get
$$ \sech^2 x = 1 - \tanh^2 x = \sum_{k=1}^{\infty} \frac{2^{2k}(2k-1)(2^{2k}-1)}{(2k)!} B_{2k} x^{2k-2}, \ \ \vert x \vert < \pi/2. $$
Hence
\begin{align}\label{eqn2.8}
\left(\frac{\tanh x}{x}\right)^2 =  - \sum_{k=2}^{\infty} \frac{2^{2k}(2k-1)(2^{2k}-1)}{(2k)!} B_{2k} x^{2k-4}, \ \ \vert x \vert < \pi/2.
\end{align}
The following lemmas are also necessary.

\begin{lemma}\label{lem1} \cite{klen}
For $ 0 < x < \pi/2, $ we have
\begin{align}\nonumber
1 - \frac{x^2}{6} < \frac{\sin x}{x} < \frac{x}{\sinh x}.
\end{align}
\end{lemma}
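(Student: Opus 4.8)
The plan is to treat the two inequalities in the chain separately, since they are of different natures. For the left inequality $1 - x^2/6 < \sin x / x$, I would appeal directly to the Maclaurin expansion \eqref{eqn2.2}, which gives
\[
\frac{\sin x}{x} - \left(1 - \frac{x^2}{6}\right) = \sum_{k=2}^{\infty} \frac{(-1)^k}{(2k+1)!}\, x^{2k} = \frac{x^4}{120} - \frac{x^6}{5040} + \cdots .
\]
This is an alternating series whose terms have magnitude $x^{2k}/(2k+1)!$; the ratio of consecutive magnitudes is $x^2/\big((2k+2)(2k+3)\big)$, which for $0 < x < \pi/2$ stays well below $1$, so the terms strictly decrease and the tail is bounded below by its leading positive term $x^4/120 > 0$. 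Hence the difference is strictly positive. (Equivalently, and without invoking the convergence bookkeeping, one may set $f(x) = \sin x - x + x^3/6$ and note that $f''(x) = x - \sin x > 0$ forces $f'$ to increase from $f'(0)=0$, and then $f$ to increase from $f(0)=0$, yielding $f>0$.)

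For the right inequality $\sin x / x < x / \sinh x$, I would first clear denominators and reduce it to the symmetric form $\sin x \sinh x < x^2$ on $(0,\pi/2)$. I would then set $g(x) = x^2 - \sin x \sinh x$ and prove $g > 0$ by repeated differentiation starting from the vanishing initial data $g(0) = g'(0) = 0$. A short computation collapses the cross terms and gives
\[
g''(x) = 2 - 2\cos x \cosh x ,
\]
so the entire argument reduces to the single auxiliary estimate $\cos x \cosh x < 1$.

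This auxiliary estimate is the crux of the matter, and I would dispatch it by the same monotonicity device: writing $h(x) = \cos x \cosh x$, one finds $h''(x) = -2\sin x \sinh x < 0$ on $(0,\pi/2)$, so $h'$ decreases from $h'(0)=0$ and is therefore negative, whence $h$ decreases from $h(0)=1$ and stays strictly below $1$. Feeding this back yields $g'' > 0$, so $g'$ increases from $g'(0)=0$ and $g$ increases from $g(0)=0$, establishing $g>0$ and hence the right inequality. The only point requiring care is that each "increasing (or decreasing) from a zero initial value" step be justified down to the boundary at $x=0$; since every function involved is analytic with the stated limiting values, these steps are routine. Combining the two halves gives the full chain.
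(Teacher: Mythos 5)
Your proof is correct, but note that the paper does not actually prove this lemma at all: it is imported verbatim from the cited reference \cite{klen}, so any self-contained argument is by definition a different route. What you supply is a clean elementary verification. For the left half, both of your variants work; the series route writes $\frac{\sin x}{x}-\bigl(1-\frac{x^2}{6}\bigr)$ as an alternating series with strictly decreasing terms starting from $\frac{x^4}{120}$, though be careful with the wording --- such a sum is bounded \emph{above} by its leading term and bounded below by the difference of the first two terms, which is what actually gives positivity; your backup argument via $f(x)=\sin x - x + \frac{x^3}{6}$, $f''(x)=x-\sin x>0$, is airtight and renders the point moot. For the right half, the reduction to $\sin x\sinh x < x^2$ and the two-stage differentiation $g''(x)=2-2\cos x\cosh x$, $h''(x)=-2\sin x\sinh x<0$ with vanishing initial data at $0$ is a standard and correct device; the cross terms do cancel as you claim, and $\cos x\cosh x<1$ on $(0,\pi/2)$ follows. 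The payoff of your approach is that it makes the paper's Lemma~\ref{lem1} self-contained using only the series \eqref{eqn2.2} and elementary calculus, at the cost of a slightly longer exposition than the one-line citation the authors chose.
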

\begin{lemma}\label{lem2}\cite{bercu}
For every $ x \neq 0 , $ we have
\begin{align}\nonumber
1 + \frac{5 x^2}{x^2 + 15} < \frac{x}{\tanh x}.
\end{align}
\end{lemma}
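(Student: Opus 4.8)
The plan is to reduce the claimed bound to a statement about $\sinh$ and $\cosh$ whose power series has manifestly nonnegative coefficients. Since $x/\tanh x$ is even, it suffices to treat $x>0$. First I would rewrite the left-hand side as $1 + \frac{5x^2}{x^2+15} = \frac{6x^2+15}{x^2+15}$, so that the assertion $1 + \frac{5x^2}{x^2+15} < \frac{x}{\tanh x}$ becomes, after clearing the positive factors $x^2+15$ and $\tanh x$, the inequality $(6x^2+15)\tanh x < x(x^2+15)$. Multiplying through by $\cosh x>0$ and using $\tanh x\,\cosh x = \sinh x$, this is in turn equivalent to
\[
\Phi(x) := (x^3+15x)\cosh x - (6x^2+15)\sinh x > 0, \qquad x > 0.
\]

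The advantage of working with $\Phi$ is that $\sinh$ and $\cosh$ have only nonnegative Taylor coefficients, which opens the door to a clean sign analysis. Inserting $\cosh x = \sum_{k\ge 0} x^{2k}/(2k)!$ and $\sinh x = \sum_{k\ge 0} x^{2k+1}/(2k+1)!$, one sees that every power of $x$ occurring in $\Phi$ is odd. I would then collect the four contributing terms in the coefficient of $x^{2m+1}$, namely $\tfrac{1}{(2m-2)!} + \tfrac{15}{(2m)!} - \tfrac{6}{(2m-1)!} - \tfrac{15}{(2m+1)!}$, and bring them over the common denominator $(2m+1)!$, which turns the coefficient into $\tfrac{1}{(2m+1)!}P(m)$ with $P(m)$ a cubic in $m$.

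The crux of the argument is the algebraic simplification of $P(m)$: I expect it to collapse to the factored form $P(m) = 8m(m-1)(m-2)$, so that the coefficient of $x^{2m+1}$ in $\Phi$ equals $\frac{8m(m-1)(m-2)}{(2m+1)!}$. This vanishes for $m=0,1,2$ and is strictly positive for every $m\ge 3$, the first surviving term being $\tfrac{1}{105}x^7$. Hence
\[
\Phi(x) = \sum_{m\ge 3} \frac{8m(m-1)(m-2)}{(2m+1)!}\,x^{2m+1}
\]
is a sum of positive terms for $x>0$, giving $\Phi(x)>0$ and thereby the claim. A pleasant feature is that, because the $\sinh$ and $\cosh$ series converge for all real $x$, this single argument settles the inequality for every $x>0$ (equivalently every $x\ne 0$) at once, with no separate treatment of large $x$.

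The main obstacle is purely the bookkeeping: correctly matching the four index shifts in the product and verifying the factorization $P(m)=8m(m-1)(m-2)$, after which positivity is immediate. An alternative route would be to set $\psi(x)=\frac{x^3+15x}{6x^2+15}-\tanh x$ and argue by repeated differentiation, but each differentiation reintroduces $\sech^2 x$ and forces a delicate sign analysis in the intermediate range of $x$; I would therefore prefer the power-series reformulation above, which avoids these difficulties entirely.
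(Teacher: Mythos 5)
Your proof is correct, and I checked the decisive computation: bringing the four contributions to the coefficient of $x^{2m+1}$ over the common denominator $(2m+1)!$ gives the numerator $(2m+1)(2m)(2m-1) + 15(2m+1) - 6(2m)(2m+1) - 15 = 8m^3 - 24m^2 + 16m = 8m(m-1)(m-2)$, which indeed vanishes at $m=0,1,2$ and is positive for $m\ge 3$, so $\Phi(x)=\sum_{m\ge 3}\frac{8m(m-1)(m-2)}{(2m+1)!}\,x^{2m+1}\ge \frac{x^{7}}{105}>0$ for $x>0$, with evenness disposing of $x<0$. Note, however, that the paper itself offers no proof of this lemma: it is imported verbatim from the cited Bercu--Wu paper, where the bound comes from the Pad\'{e} approximation method --- the rational function $\frac{x(x^2+15)}{6x^2+15}$ (equivalently your rewriting $1+\frac{5x^2}{x^2+15}$ for $x/\tanh x$) is exactly the $[3/2]$ Pad\'{e} approximant of $\tanh x$ at the origin, which explains a priori why the first three coefficients of $\Phi$ cancel and the error only opens at order $x^{7}$. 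Your argument is thus a genuinely self-contained replacement for the citation: it trades the structural insight of the Pad\'{e} viewpoint (which explains where the rational bound comes from and generates further refinements systematically) for a short, fully elementary verification with manifestly nonnegative coefficients, settling the inequality for all real $x\ne 0$ in one stroke. The only cosmetic caveat is that your coefficient formula $\frac{1}{(2m-2)!}+\frac{15}{(2m)!}-\frac{6}{(2m-1)!}-\frac{15}{(2m+1)!}$ presumes $m\ge 1$; for $m=0$ the coefficient is $15-15=0$ directly, consistent with the factored form, so nothing is lost.
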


\begin{lemma}\label{lem3}
For $ 0 < x < \pi/2, $ it is true that
\begin{align}\nonumber
\frac{x}{\tanh x} < \frac{\tan x}{x}.
\end{align}
\end{lemma}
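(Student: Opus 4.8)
The plan is to recast Lemma~\ref{lem3} in a symmetric product form that is easier to analyze. On $(0,\pi/2)$ the quantities $x$, $\tan x$ and $\tanh x$ are all positive, so multiplying the asserted inequality $\frac{x}{\tanh x}<\frac{\tan x}{x}$ by $x\tanh x>0$ shows that it is equivalent to
\[
\tan x\,\tanh x > x^2,\qquad 0<x<\pi/2 .
\]
I would prove this reformulated inequality instead.

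To this end I would introduce the auxiliary function $h(x)=\ln\tan x+\ln\tanh x-2\ln x$ on $(0,\pi/2)$. Since $\tan x/x\to 1$ and $\tanh x/x\to 1$ as $x\to 0^{+}$, we have $h(x)=\ln\!\big(\tfrac{\tan x}{x}\cdot\tfrac{\tanh x}{x}\big)\to 0$. Differentiating and using $\frac{d}{dx}\ln\tan x=\sec^2x/\tan x=2/\sin 2x$ and $\frac{d}{dx}\ln\tanh x=\sech^2x/\tanh x=2/\sinh 2x$ gives
\[
h'(x)=\frac{2}{\sin 2x}+\frac{2}{\sinh 2x}-\frac{2}{x}
     =2\left(\frac{1}{\sin u}+\frac{1}{\sinh u}-\frac{2}{u}\right),\qquad u=2x\in(0,\pi).
\]
Thus it suffices to establish the auxiliary inequality $\frac{1}{\sin u}+\frac{1}{\sinh u}>\frac{2}{u}$ for $0<u<\pi$: once this is known, $h'>0$ on $(0,\pi/2)$, so $h$ is strictly increasing and, starting from the limit value $0$, remains positive, which is exactly $\tan x\,\tanh x>x^2$.

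The heart of the argument is the auxiliary inequality, and here the Bernoulli expansions do the work cleanly. Dividing \eqref{eqn2.3} and \eqref{eqn2.31} by $u$ yields, for $0<u<\pi$,
\[
\frac{1}{\sin u}=\frac{1}{u}+\sum_{k=1}^{\infty}\frac{2(2^{2k-1}-1)}{(2k)!}\,|B_{2k}|\,u^{2k-1},
\qquad
\frac{1}{\sinh u}=\frac{1}{u}-\sum_{k=1}^{\infty}\frac{2(2^{2k-1}-1)}{(2k)!}\,B_{2k}\,u^{2k-1}.
\]
Adding these and using that $B_{2k}>0$ for odd $k$ (so $|B_{2k}|-B_{2k}=0$) while $B_{2k}<0$ for even $k$ (so $|B_{2k}|-B_{2k}=2|B_{2k}|$), every odd-indexed contribution cancels and every even-indexed one reinforces, leaving
\[
\frac{1}{\sin u}+\frac{1}{\sinh u}-\frac{2}{u}
=\sum_{\substack{k\ \mathrm{even}}}\frac{4(2^{2k-1}-1)}{(2k)!}\,|B_{2k}|\,u^{2k-1}>0 ,
\]
since each summand is positive for $u>0$. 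The main point to get right is precisely this sign bookkeeping of the even-indexed Bernoulli numbers, together with the legitimacy of adding the two series term by term within their common radius of convergence $\pi$; that done, the chain $h'>0\Rightarrow h$ increasing $\Rightarrow h>0$ closes the proof.

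As an alternative, one could instead compare $\frac{\tan x}{x}$ with $x\coth x$ directly via \eqref{eqn2.5}--\eqref{eqn2.6}: the constant and $x^2$ coefficients coincide, the $x^{2m}$-coefficient of $x\coth x$ is negative (hence trivially dominated) when $m$ is even, and when it is positive it is dominated using $1<\zeta(2k)<2$ in $|B_{2k}|=2(2k)!\,\zeta(2k)/(2\pi)^{2k}$. I expect the obstacle in either route to be the same, namely controlling the positive (odd-index) hyperbolic contributions; the logarithmic reduction above is attractive exactly because it forces those contributions to cancel outright rather than merely be estimated.
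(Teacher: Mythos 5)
Your proof is correct, but it takes a genuinely different route from the paper. The paper disposes of Lemma~\ref{lem3} in two lines by chaining two quoted results: the Adamovi\'{c}--Mitrinovi\'{c} inequality $\cos x<(\sin x/x)^3$, rewritten as $(x/\sin x)^2<\tan x/x$, together with the inequality $x/\tanh x<(x/\sin x)^2$ from Bagul--Chesneau, so that $x/\sin x$ squared serves as the interpolating quantity. You instead prove the equivalent product form $\tan x\,\tanh x>x^2$ from scratch: the logarithmic derivative computation reducing everything to $\tfrac{1}{\sin u}+\tfrac{1}{\sinh u}>\tfrac{2}{u}$ on $(0,\pi)$ is clean, and the sign bookkeeping is right --- since $B_{2k}=(-1)^{k+1}|B_{2k}|$, the odd-$k$ terms of the sum of \eqref{eqn2.3} and \eqref{eqn2.31} (divided by $u$) cancel exactly and the even-$k$ terms double, leaving a series with positive coefficients whose leading term is $\tfrac{7}{180}u^{3}$; the monotonicity argument then closes as you say. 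What the paper's route buys is brevity, at the cost of importing two nontrivial external inequalities (the second of which is itself proved elsewhere by comparable effort). What your route buys is self-containment --- it uses only the series already recorded in Section~\ref{sec2} --- plus two free by-products: the strict inequality $\tan x\,\tanh x>x^{2}$ with an explicit lower-order correction, and the fact that $\ln\bigl(\tan x\,\tanh x/x^{2}\bigr)$ is strictly increasing on $(0,\pi/2)$, which is strictly more information than the lemma asks for. Your closing sketch of the alternative comparison of $\tan x/x$ with $x\coth x$ is not needed and could be dropped.
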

\begin{proof}
The famous Adamovi\'{c}-Mitrinovi\'{c} inequality \cite{mitrinovic}
$$ \cos x < \left( \frac{\sin x}{x} \right)^3  \ \ (0 < x < \pi/2) $$ can be written as $$ \left(\frac{x}{\sin x} \right)^2 < \frac{\tan x}{x}  \ \ (0 < x < \pi/2). $$
Combining this with the inequality \cite[Proposition 1.2]{bagul}
$$ \frac{x}{\tanh x} < \left(\frac{x}{\sin x} \right)^2  \ \ (0 < x < \pi/2), $$ we get the required inequality.
\end{proof}

\begin{lemma}\label{lem4}\cite[p. 805]{abramowitz}
\begin{align}\nonumber
  \frac{2(2k)!}{(2 \pi)^{2k}} < \vert B_{2k} \vert < \frac{2(2k)!}{(2 \pi)^{2k}} \frac{2^{2k-1}}{2^{2k-1}-1}, \ \  (k=1, 2, 3, \cdots)
\end{align}
where $ B_2, B_4, B_6, \cdots $ are Bernoulli numbers.
\end{lemma}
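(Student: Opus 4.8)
The plan is to connect the Bernoulli numbers with the Riemann zeta function through Euler's classical identity
\[
  \vert B_{2k}\vert = \frac{2(2k)!}{(2\pi)^{2k}}\,\zeta(2k), \qquad \zeta(s)=\sum_{n=1}^{\infty}\frac{1}{n^{s}},
\]
which holds for every integer $k\ge 1$; here the sign $(-1)^{k+1}$ of $B_{2k}$ is exactly what is absorbed by the absolute value. Dividing the asserted two-sided estimate by the common positive factor $2(2k)!/(2\pi)^{2k}$, one sees that \textbf{the lemma is equivalent} to the purely numerical statement
\[
  1 < \zeta(2k) < \frac{2^{2k-1}}{2^{2k-1}-1},
\]
so the entire task reduces to bounding the even zeta values.

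The lower bound is immediate, since every term of $\zeta(2k)=1+\sum_{n\ge 2}n^{-2k}$ is positive and the first one equals $1$. For the upper bound I would recast the inequality as a term-by-term comparison. Writing $S_e=\sum_{m\ge 1}(2m)^{-2k}$ and $S_o=\sum_{m\ge 1}(2m+1)^{-2k}$ for the even and odd tails of $\zeta(2k)$, one has $\zeta(2k)-1=S_e+S_o$ together with $S_e=2^{-2k}\zeta(2k)$. A short rearrangement shows that $\zeta(2k)<2^{2k-1}/(2^{2k-1}-1)$ is equivalent to $\zeta(2k)-1<2\,S_e$, that is, to $S_o<S_e$, i.e.
\[
  \sum_{m=1}^{\infty}\frac{1}{(2m+1)^{2k}} < \sum_{m=1}^{\infty}\frac{1}{(2m)^{2k}},
\]
and this holds trivially because $2m+1>2m>0$ makes the inequality hold in each summand.

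The only genuine content is the Euler identity linking $\vert B_{2k}\vert$ to $\zeta(2k)$; this is the step I expect to be the main obstacle for a reader unwilling to take it as known, whereas everything downstream is entirely elementary. Since the statement is quoted verbatim from Abramowitz and Stegun, within the paper it suffices to cite it, but the sketch above records how it can be obtained from first principles once that identity is granted.
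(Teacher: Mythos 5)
Your proposal is correct. Note, however, that the paper offers no proof of this lemma at all: it is quoted verbatim from Abramowitz and Stegun \cite[p.~805]{abramowitz} and used as a black box, so any comparison is between your derivation and a bare citation. Your reduction is sound on every step: Euler's identity $\vert B_{2k}\vert = \frac{2(2k)!}{(2\pi)^{2k}}\zeta(2k)$ converts the claim into $1<\zeta(2k)<\frac{2^{2k-1}}{2^{2k-1}-1}$; the lower bound is trivial; and the chain
\[
\zeta(2k)<\frac{2^{2k-1}}{2^{2k-1}-1}
\;\Longleftrightarrow\;
\zeta(2k)\bigl(1-2^{1-2k}\bigr)<1
\;\Longleftrightarrow\;
\zeta(2k)-1<2S_e
\;\Longleftrightarrow\;
S_o<S_e
\]
is algebraically exact given $S_e=2^{-2k}\zeta(2k)$ and $\zeta(2k)=1+S_e+S_o$, with $S_o<S_e$ holding termwise since $(2m+1)^{-2k}<(2m)^{-2k}$. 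This is in fact the standard route by which the Abramowitz--Stegun bound is established, so your sketch supplies exactly the argument the cited reference encapsulates. The one honest caveat, which you already flag, is that Euler's evaluation of $\zeta(2k)$ in terms of $B_{2k}$ is itself a substantial classical theorem; if the goal were a proof from first principles that input would still need to be established or cited separately. Within the economy of the paper, the citation suffices, but your derivation makes the lemma self-contained modulo that single classical fact.
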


\begin{lemma}\label{lem5}
 The inequality
\begin{align}\nonumber
\vert B_{2k} \vert > \frac{1}{2^{2k-1}(2k+1)} = \frac{2^{1-2k}}{(2k+1)}
\end{align}
holds for all integers $ k \geq 2.$
\end{lemma}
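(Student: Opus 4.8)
The plan is to deduce Lemma~\ref{lem5} directly from the elementary lower bound for $|B_{2k}|$ already supplied by Lemma~\ref{lem4}. Recall that Lemma~\ref{lem4} gives, for every $k\ge 1$,
\begin{align}\nonumber
|B_{2k}| > \frac{2(2k)!}{(2\pi)^{2k}} = \frac{2(2k)!}{2^{2k}\pi^{2k}}.
\end{align}
Hence it suffices to prove that this lower bound already dominates the target quantity, i.e.\ that
\begin{align}\nonumber
\frac{2(2k)!}{2^{2k}\pi^{2k}} \ge \frac{2^{1-2k}}{2k+1}
\end{align}
for all integers $k\ge 2$. Clearing the common factor $2^{1-2k}$ from both sides (equivalently, multiplying through by $2^{2k-1}$) collapses this to the clean statement $(2k)!\,(2k+1)/\pi^{2k} > 1$, that is,
\begin{align}\nonumber
(2k+1)! > \pi^{2k}, \qquad k\ge 2.
\end{align}

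First I would carry out this reduction explicitly, being careful with the powers of $2$: the $2^{2k}$ from $(2\pi)^{2k}$ on the left and the $2^{2k}$ implicit in $2^{1-2k}$ on the right cancel, leaving only the comparison of a factorial against a power of $\pi$. Then I would establish $(2k+1)! > \pi^{2k}$ by induction on $k$. For the base case $k=2$ one checks $5! = 120$ against $\pi^4 \approx 97.41$, so the inequality holds strictly. For the inductive step, assuming $(2k+1)! > \pi^{2k}$, I would write
\begin{align}\nonumber
(2k+3)! = (2k+3)(2k+2)\,(2k+1)! > (2k+3)(2k+2)\,\pi^{2k},
\end{align}
and it remains to note that $(2k+3)(2k+2) \ge 7\cdot 6 = 42 > \pi^2$ for all $k\ge 2$, which yields $(2k+3)! > \pi^{2k+2} = \pi^{2(k+1)}$ and closes the induction.

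This argument is essentially computational, so I do not anticipate a genuine conceptual obstacle; the factorial on the left grows far faster than the power of $\pi$ on the right, and the gap only widens with $k$. The one point requiring care is the numerical verification of the base case $\pi^4 < 120$, since the margin there ($97.41$ versus $120$) is the tightest of all, whereas every subsequent step has enormous slack (the per-step growth factor $(2k+3)(2k+2)$ exceeds $\pi^2$ by more than a factor of four already at $k=2$). I would therefore be explicit about the base-case estimate and then let the induction run.
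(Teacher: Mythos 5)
Your proposal is correct and follows essentially the same route as the paper: both deduce the claim from the lower bound in Lemma~\ref{lem4} by reducing it to the inequality $(2k+1)! > \pi^{2k}$ for $k \ge 2$. The only difference is that the paper declares this last inequality ``evident'' while you supply the short induction, which is a harmless (indeed welcome) addition.
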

\begin{proof}
It is evident that
$$ (2k + 1)! > \pi^{2k} \ \ (k=2, 3, \cdots) $$
$$ \text{i.e.,} \ \ \frac{1}{\pi^{2k}} > \frac{1}{(2k+1)!}. $$ This yields
$$ \vert B_{2k} \vert > \frac{(2k)!}{2^{2k-1} \pi^{2k}} > \frac{(2k)!}{2^{2k-1} (2k+1)!} =  \frac{2^{1-2k}}{(2k+1)} $$ due to Lemma \ref{lem4}.
\end{proof}

\begin{lemma}\label{lem6}
The inequality
\begin{align}\nonumber
\vert B_{2k} \vert > \frac{1}{2^{2k}(2k+1)} = \frac{2^{-2k}}{(2k+1)}
\end{align}
holds for all integers $ k \geq 1. $
\end{lemma}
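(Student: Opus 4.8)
The plan is to reduce the claim to Lemma \ref{lem5} on the bulk of the range and then dispose of the single edge case by hand. The key observation is that the right-hand side asserted here is exactly one half of the right-hand side in Lemma \ref{lem5}: indeed $\frac{2^{-2k}}{2k+1} = \frac{1}{2}\cdot\frac{2^{1-2k}}{2k+1}$. Since Lemma \ref{lem5} already gives $|B_{2k}| > \frac{2^{1-2k}}{2k+1}$ for every integer $k \geq 2$, and the target bound is strictly smaller than this, the inequality for $k \geq 2$ follows at once with no further work.

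The only value not covered by Lemma \ref{lem5} is $k=1$, because the elementary estimate $(2k+1)! > \pi^{2k}$ underlying that lemma fails at $k=1$ (one has $3! = 6 < \pi^2$). For this case I would verify the inequality directly from the known value $B_2 = 1/6$: the right-hand side is $\frac{2^{-2}}{3} = \frac{1}{12}$, so the claim reduces to the obvious $\frac{1}{6} > \frac{1}{12}$. Alternatively, one can invoke Lemma \ref{lem4} at $k=1$, which yields $|B_2| > \frac{2\cdot 2!}{(2\pi)^2} = \frac{1}{\pi^2}$, and then note that $\frac{1}{\pi^2} > \frac{1}{12}$ is equivalent to $\pi^2 < 12$, which holds.

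I do not anticipate any real obstacle here: Lemma \ref{lem6} is simply a uniform weakening of Lemma \ref{lem5}'s bound by a factor of two, traded off against an extension of the admissible range down to $k=1$. The entire substance of the proof is the base-case check at $k=1$, and the remainder is a one-line comparison against the already-established Lemma \ref{lem5}.
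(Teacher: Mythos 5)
Your proof is correct, but it is organized differently from the paper's. The paper proves Lemma \ref{lem6} by rerunning the argument of Lemma \ref{lem5} with the weaker factorial estimate $2(2k+1)! > \pi^{2k}$, which---unlike $(2k+1)! > \pi^{2k}$---already holds at $k=1$ (since $2\cdot 3! = 12 > \pi^2$); combined with Lemma \ref{lem4} this yields $\vert B_{2k}\vert > \frac{(2k)!}{2^{2k-1}\pi^{2k}} > \frac{(2k)!}{2^{2k-1}\cdot 2(2k+1)!} = \frac{2^{-2k}}{2k+1}$ uniformly for all $k\ge 1$, with no case split. You instead observe that the claimed bound is exactly half of the bound in Lemma \ref{lem5}, so that lemma disposes of all $k\ge 2$ for free, and you check $k=1$ directly from $B_2=1/6$ (or from Lemma \ref{lem4} together with $\pi^2<12$). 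Both routes are sound and both ultimately exploit the same factor-of-two slack; yours is arguably cleaner in that it reuses Lemma \ref{lem5} rather than repeating its computation, at the cost of an explicit base case. Your diagnosis of why Lemma \ref{lem5} stops at $k=2$---namely that $3!=6<\pi^2$---is also exactly right.
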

\begin{proof}
Since, $$ 2(2k+1)! > \pi^{2k} \ \ (k=1, 2, 3, \cdots),$$
we get the desired result by applying the same argument as in the proof of Lemma \ref{lem5}.
\end{proof}

\begin{lemma}\label{lem7}

The inequality
\begin{align}\nonumber
\frac{\vert B_{2k+2} \vert}{\vert B_{2k} \vert} > \frac{(k+1)(2k+1)(2^{2k-1} - 1)}{2^{2k-1}(2^{2k+2} - 1)}
\end{align}
is true for all integers $ k \geq 2. $
\end{lemma}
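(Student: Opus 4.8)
The plan is to bound the ratio $|B_{2k+2}|/|B_{2k}|$ from below by feeding the two-sided estimates of Lemma~\ref{lem4} in the correct directions: the lower bound for the numerator and the (weaker) upper bound for the denominator. Concretely, I would write $|B_{2k+2}| > 2(2k+2)!/(2\pi)^{2k+2}$ together with $|B_{2k}| < \frac{2(2k)!}{(2\pi)^{2k}} \cdot \frac{2^{2k-1}}{2^{2k-1}-1}$, so that after dividing and cancelling the common constants,
$$
\frac{|B_{2k+2}|}{|B_{2k}|} > \frac{(2k+2)(2k+1)}{(2\pi)^2} \cdot \frac{2^{2k-1}-1}{2^{2k-1}} = \frac{(k+1)(2k+1)(2^{2k-1}-1)}{2\pi^2 \, 2^{2k-1}}.
$$

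It then remains only to compare this quantity with the asserted right-hand side. Dividing both sides by the common positive factor $(k+1)(2k+1)(2^{2k-1}-1)/2^{2k-1}$, the desired inequality collapses to
$$
\frac{1}{2\pi^2} \geq \frac{1}{2^{2k+2}-1}, \qquad \text{equivalently} \qquad 2^{2k+2} - 1 \geq 2\pi^2.
$$
Since $2\pi^2 \approx 19.74$ and the left side increases with $k$, it suffices to check $k=2$, where $2^{6}-1 = 63 > 2\pi^2$. Combining this with the strict inequality already furnished by Lemma~\ref{lem4} yields the claim for every $k \geq 2$.

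The only real obstacle here is bookkeeping rather than mathematics: one must apply each half of Lemma~\ref{lem4} in the direction that makes the combined estimate a genuine \emph{lower} bound, and then confirm that the terminal inequality $2^{2k+2} - 1 \geq 2\pi^2$ holds from $k=2$ onward. Indeed, the hypothesis $k \geq 2$ is precisely what the reduction needs, since at $k=1$ one would instead require $2^{4}-1 = 15 \geq 2\pi^2 \approx 19.74$, which fails; this explains the stated restriction on $k$.
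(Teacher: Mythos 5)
Your argument is correct and is essentially identical to the paper's own proof: both apply Lemma \ref{lem4} with the lower bound on $\vert B_{2k+2}\vert$ and the upper bound on $\vert B_{2k}\vert$, arrive at the same intermediate quantity $\frac{(k+1)(2k+1)(2^{2k-1}-1)}{2^{2k}\pi^2}$, and reduce the final comparison to $2\pi^2 < 2^{2k+2}-1$ for $k \geq 2$. Your added remark that the reduction fails at $k=1$ is a nice sanity check but otherwise nothing differs.
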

\begin{proof}
From Lemma \ref{lem4}, we can write
\begin{align*}
\frac{\vert B_{2k+2} \vert}{\vert B_{2k} \vert} &> \frac{2(2k+2)!}{(2\pi)^{2k+2}} \frac{(2\pi)^{2k}(2^{2k-1}-1)}{2(2k)! 2^{2k-1}} \\
&= \frac{(k+1)(2k+1)(2^{2k-1}-1)}{ 2^{2k}\pi^2} \\
&> \frac{(k+1)(2k+1)(2^{2k-1}-1)}{2^{2k-1}(2^{2k+2} - 1)},
\end{align*}
as $ 2^{2k} \pi^2 < 2^{2k-1}(2^{2k+2}-1) $ i.e., $ 2 \pi^2 + 1 < 2^{2k+2} $ for all integers $ k \geq 2. $
\end{proof}

\section{Proofs and corollaries}\label{sec3}

\noindent{\bf Proof of Theorem \ref{thm1}.}
For the leftmost double inequality of (\ref{eqn3.1}), we add (\ref{eqn2.3}) and (\ref{eqn2.8}) to get
\begin{align*}
\frac{x}{\sin x} + \left(\frac{\tanh x}{x}\right)^2 &= 1 + \sum_{k=1}^{\infty} \frac{2(2^{2k-1}-1)}{(2k)!}\vert B_{2k} \vert x^{2k} \\
&- \sum_{k=2}^{\infty} \frac{2^{2k}(2k-1)(2^{2k}-1)}{(2k)!} B_{2k} x^{2k-4}, \ \ \vert x \vert < \pi/2 \\
&= 2 + \sum_{k=1}^{\infty} \left[ \frac{2(2^{2k-1}-1)}{(2k)!} \vert B_{2k} \vert - \frac{2^{2k+4}(2k+3)(2^{2k+4}-1)}{(2k+4)!} B_{2k+4} \right] x^{2k} \\
&= 2 + \left( \sum_{k=1}^{\infty} a_k x^{2k} := A(x) \right), \ \ \vert x \vert < \pi/2
\end{align*}
 where $ a_k = \frac{2(2^{2k-1}-1)}{(2k)!} \vert B_{2k} \vert - \frac{2^{2k+4}(2k+3)(2^{2k+4}-1)}{(2k+4)!} B_{2k+4} \ \ (k=1, 2, 3, \cdots). $
 
Clearly  $ a_k > 0 $ for $ k = 2, 4, 6, \cdots. $
For $ k = 1, 3, 5, 7, \cdots $ we claim that $ a_k < 0 $ i.e.,
$$ \frac{\vert B_{2k} \vert}{\vert B_{2k+4} \vert} < \frac{2^{2k+1}(2^{2k+4}-1)}{(k+1)(k+2)(2k+1)} \frac{1}{(2^{2k-1}-1)} = L(k). $$
From Lemma \ref{lem4}, we write
$$ \vert B_{2k} \vert < \frac{2(2k)!}{2^{2k} \pi^{2k}} \frac{2^{2k-1}}{(2^{2k-1}-1)} \ \ \text{and} \ \ \frac{1}{\vert B_{2k+4} \vert} < \frac{2^{2k+4} \pi^{2k+4}}{2(2k+4)!}. $$ Therefore
$$ \frac{\vert B_{2k} \vert}{\vert B_{2k+4} \vert} < \frac{1}{(k+1)(k+2)(2k+1)} \frac{1}{((2^{2k-1}-1)} \frac{\pi^4 2^{2k+1}}{(2k+3)} = M(k). $$
Since $ \pi^4 < (2k+3)(2^{2k+4}-1) \ \ (k = 1, 2, 3, \cdots) $ implies $ M(k) < L(k), $ so $ a_k < 0 $ for $ k = 1, 3, 5, \cdots $ and we conclude that $ A(x) $ is an alternating convergent series whose first term is negative. The required double inequality follows by the truncation of $ A(x).$

For the rightmost double inequality of (\ref{eqn3.1}), we have by Lemmas \ref{lem1} and \ref{lem2} that 
\begin{align*}
\left(\frac{\sin x}{x}\right)^2 + \frac{x}{\tanh x} &> \left(1 - \frac{x^2}{6} \right)^2 + \left(1 + \frac{5x^2}{x^2 + 15}\right) \ \ (0 < x < \pi/2) \\
&= 2 + \frac{x^4}{36} - \frac{x^4}{3x^2 + 45} \\
&= 2 + \frac{x^4}{36} \left(\frac{x^2 + 3}{x^2 + 15} \right) \\
&> 2 + \frac{x^4}{36} \left(\frac{x^2 + 3}{5x^2 + 15} \right) = 2 + \frac{x^4}{180}  > 2. 
\end{align*}
This completes the proof.
$\hfill\square$

\begin{remark}
If $ n $ is any odd positive integer and $ 0 < x < \pi/2 $ then
\begin{align}\nonumber
2 + \sum_{k=1}^{n} a_k x^{2k} < \frac{x}{\sin x} + \left(\frac{\tanh x}{x}\right)^2 < 2 + \sum_{k=1}^{n+1} a_k x^{2k}
\end{align}
where $ a_k $ is as defined in the proof of Theorem \ref{thm1}. In particular, we get
$$ 2 - \frac{x^2}{2} < \frac{x}{\sin x} + \left(\frac{\tanh x}{x}\right)^2 < 2 - \frac{x^2}{2} + \frac{143 x^4}{360}. $$
\end{remark}

\vspace{.5cm}

\noindent{\bf Proof of Theorem \ref{thm2}.}
As $ \frac{x}{\tan x} < 1 \ \ (0 < x < \pi/2), $ we can write
$$ \frac{\sinh x}{x} + \left(\frac{x}{\tan x}\right)^2 < \frac{\sinh x}{x} + \frac{x}{\tan x} \ \ (0 < x < \pi/2). $$
Utilizing series expansions from (\ref{eqn2.2}) and (\ref{eqn2.4}),
\begin{align*}
\frac{\sinh x}{x} + \left(\frac{x}{\tan x}\right)^2 &< \sum_{k=0}^{\infty} \frac{x^{2k}}{(2k+1)!} + 1 - \sum_{k=1}^{\infty}\frac{2^{2k} \vert B_{2k} \vert}{(2k)!} x^{2k} \\
&= 2 + \sum_{k=1}^{\infty} \frac{1}{(2k)!}\left[\frac{1}{(2k+1)} - 2^{2k} \vert B_{2k} \vert \right] x^{2k} \\
&= 2 + \left( \sum_{k=1}^{\infty} b_k x^{2k} := B(x) \right) \ \ (0 < x < \pi/2),
\end{align*}
where $ b_k = \frac{1}{(2k)!}\left[\frac{1}{(2k+1)} - 2^{2k} \vert B_{2k} \vert \right] < 0 $ for all $ k \geq 1 $ by virtue of Lemma \ref{lem6}.
So $$ \frac{\sinh x}{x} + \left(\frac{x}{\tan x}\right)^2 < 2 + b_1 x^2 \ \ (0 < x < \pi/2) $$ and by $ b_1 = -1/6 $ we get the desired leftmost  inequality of (\ref{eqn3.3}).

To prove the rightmost double inequality of (\ref{eqn3.3}), we equivalently  prove that
$$ \tan x \sinh^2 x - 2 x^2 \tan x + x^3 > \frac{x^7}{45} > 0 \ \ (0 < x < \pi/2). $$
Let
$$ f(x) = \tan x \sinh^2 x - 2 x^2 \tan x + x^3 \ \  (0 < x < \pi/2). $$
It can be written as
$$ f(x) = x^3 - 2 x^2 \tan x + \frac{\tan x}{2} \cosh 2x - \frac{\tan x}{2} . $$
Using known series expansions of $ \cosh x $ (see (\ref{eqn2.1})) and $ \tan x $ (see (\ref{eqn2.6})), we write
\begin{align*}
f(x) &= x^3 - x^2 \tan x + \sum_{k=2}^{\infty} \frac{2^{2k-1}}{(2k)!}  x^{2k} \tan x \\
&= x^3 - x^2 \left( x + \frac{1}{3}x^3 + \frac{2}{15}x^5 + \frac{17}{315}x^7 + \frac{62}{2835}x^9 + \cdots \right) \\
&+ \frac{1}{3}x^4 \tan x + \frac{2}{45}x^6 \tan x + \frac{1}{315}x^8 \tan x + \frac{2}{14175}x^{10} \tan x + \cdots \\
&= x^3 + \left(-x^3 - \frac{1}{3}x^5 - \frac{2}{15}x^7 - \frac{17}{315}x^9 - \frac{62}{2835}x^{11} - \cdots \right) \\
&+ \left( \frac{1}{3}x^4 + \frac{2}{45}x^6 + \frac{1}{315}x^8 + \frac{2}{14175}x^{10} + \cdots \right) \times \\
 &\left(x + \frac{1}{3}x^3 + \frac{2}{15}x^5 + \frac{17}{315}x^7 + \frac{62}{2835} x^9 + \cdots \right) \\
 &= \left( - \frac{1}{3}x^5 - \frac{2}{15}x^7 - \frac{17}{315}x^9 - \frac{62}{2835}x^{11} - \cdots \right) \\
 &+ \left( \frac{1}{3}x^5 + \frac{7}{45} x^7 + \frac{59}{945}x^9 + \frac{2492}{99225}x^{11} + \cdots \right) \\
 &= \frac{1}{45}x^7 + \frac{8}{945} x^9 + \frac{46}{14175} x^{11} + \cdots   > \frac{1}{45}x^7 > 0.
\end{align*}
The proof is completed.
$\hfill\square$

\begin{remark}
The rightmost inequality of (\ref{eqn3.3}) can be put as
\begin{align}\nonumber
\left(\frac{\sinh x}{x}\right)^2 + \frac{x}{\tan x} \left(1-\frac{x^4}{45}\right) > 2 \ \ (0 < x < \pi/2).
\end{align}
\end{remark}

\begin{corollary}\label{cor1}
For $ 0 < x < \pi/2, $ we have the following chains of inequalities:
\begin{align}\label{eqn3.5}
\left(\frac{x}{\sin x}\right)^2 + \frac{\tanh x}{x} > \left(\frac{\sinh x}{x}\right)^2 + \frac{\tanh x}{x} > \left(\frac{\sinh x}{x}\right)^2 + \frac{x}{\tan x} > 2 + \frac{x^5}{45 \tan x} > 2
\end{align}
and
\begin{align}\label{eqn3.6}
\left(\frac{x}{\sinh x}\right)^2 + \frac{\tan x}{x} > \left(\frac{\sin x}{x}\right)^2 + \frac{\tan x}{x} > \left(\frac{\sin x}{x}\right)^2 + \frac{x}{\tanh x} > 2.
\end{align}
\end{corollary}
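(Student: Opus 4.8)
The plan is to read off both chains as telescoping comparisons in which each successive step replaces or removes a single summand, so that every individual inequality collapses to one of the elementary function comparisons already on hand. First I would strip the common summand from each adjacent pair. In \eqref{eqn3.5} the opening inequality carries the common term $\tanh x/x$, so it is equivalent to $(x/\sin x)^2 > (\sinh x/x)^2$, i.e. to $\sinh x/x < x/\sin x$; its successor carries the common term $(\sinh x/x)^2$ and hence reduces to $\tanh x/x > x/\tan x$. Symmetrically, in \eqref{eqn3.6} the first inequality reduces to $\sin x/x < x/\sinh x$ and the second to $\tan x/x > x/\tanh x$. Since every quantity in sight is positive on $(0,\pi/2)$, these squarings and cancellations preserve the strict inequalities.

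Next I would discharge these four reduced comparisons from the lemmas. Both $\sinh x/x < x/\sin x$ and $\sin x/x < x/\sinh x$ are equivalent, after cross-multiplication by the positive factor $x$, to the single statement $\sin x \sinh x < x^2$; and this is exactly the right half of Lemma~\ref{lem1} rewritten from $\sin x/x < x/\sinh x$. Likewise $\tanh x/x > x/\tan x$ and $\tan x/x > x/\tanh x$ both amount to $x^2 < \tan x \tanh x$, which is precisely Lemma~\ref{lem3} in the form $x/\tanh x < \tan x/x$. Thus the two inner links of each chain are immediate.

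For the terminal strict lower bounds I would invoke the main theorems verbatim. The penultimate inequality of \eqref{eqn3.5}, namely $(\sinh x/x)^2 + x/\tan x > 2 + x^5/(45 \tan x)$, is precisely the rightmost assertion of Theorem~\ref{thm2}, while the closing $2 + x^5/(45 \tan x) > 2$ is trivial because $\tan x > 0$ on $(0,\pi/2)$. Similarly the last inequality of \eqref{eqn3.6}, $(\sin x/x)^2 + x/\tanh x > 2$, is the rightmost assertion of Theorem~\ref{thm1}.

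Since every step is either a cancellation or a citation, there is no genuine analytic obstacle here. The only point demanding care is bookkeeping: confirming that the cross-multiplications which convert Lemmas~\ref{lem1} and \ref{lem3} into the exact forms needed are carried out with the correct positive multipliers, and that squaring the comparisons $\sinh x/x < x/\sin x$ and $\sin x/x < x/\sinh x$ is legitimate because both sides exceed unity on $(0,\pi/2)$. With that verified, both chains follow link by link.
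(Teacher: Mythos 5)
Your proof is correct and takes essentially the same route as the paper, whose entire argument is to cite Theorem~\ref{thm2} (resp.\ Theorem~\ref{thm1}) together with Lemmas~\ref{lem1} and~\ref{lem3}; you have simply made the term-by-term cancellations explicit. (One cosmetic slip: in the comparison $\sin x/x < x/\sinh x$ both sides lie \emph{below} $1$ on $(0,\pi/2)$, not above it, but positivity alone justifies the squaring, as you already noted.)
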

\begin{proof}

The chain of inequalities in (\ref{eqn3.5}) immediately follows by applying Theorem \ref{thm2} with Lemmas \ref{lem1} and \ref{lem3}. Similarly (\ref{eqn3.6}) follows due to Theorem \ref{thm1} and Lemmas \ref{lem1} and \ref{lem3}.
\end{proof}
Again the following corollary is an easy consequence of Theorems \ref{thm1} and \ref{thm2}.
\begin{corollary}\label{cor2}
For $ 0 < x < \pi/2 , $ we have following chain of inequalities:
\begin{align}\label{eqn3.7}
\left(\frac{x}{\sin x}\right)^2 + \frac{x}{\tan x} > \left(\frac{\sinh x}{x}\right)^2 + \frac{x}{\tan x} > 2 + \frac{x^5}{45 \tan x} > 2
\end{align}
and
\begin{align}\label{eqn3.8}
\left(\frac{x}{\sinh x}\right)^2 + \frac{x}{\tanh x} > \left(\frac{\sin x}{x}\right)^2 + \frac{x}{\tanh x}  > 2.
\end{align}
\end{corollary}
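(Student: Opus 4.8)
The plan is to exploit the fact that within each chain the successive sums share a common summand, so that the leftmost comparisons collapse to comparisons of the remaining (squared) terms, after which the tail of each chain is nothing but a rightmost bound already established in the theorems. Nothing here requires new estimates; the corollary is purely a matter of assembling the pieces.

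For the chain \eqref{eqn3.7}, first I would cancel the common term $x/\tan x$ from the two leftmost expressions, reducing the leftmost inequality to $(x/\sin x)^2 > (\sinh x/x)^2$. Since both $x/\sin x$ and $\sinh x/x$ are positive on $(0,\pi/2)$, this is equivalent to $x/\sin x > \sinh x/x$, i.e. $\sin x \sinh x < x^2$, which is precisely the right-hand inequality $\sin x/x < x/\sinh x$ of Lemma \ref{lem1}. The middle inequality $(\sinh x/x)^2 + x/\tan x > 2 + x^5/(45\tan x)$ is exactly the rightmost inequality of Theorem \ref{thm2}, and the final inequality $2 + x^5/(45\tan x) > 2$ holds because $x^5/(45\tan x) > 0$ on $(0,\pi/2)$.

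For the chain \eqref{eqn3.8}, the argument is symmetric: cancelling the common summand $x/\tanh x$ reduces the leftmost inequality to $(x/\sinh x)^2 > (\sin x/x)^2$, equivalently $x/\sinh x > \sin x/x$, which is once more the right-hand part of Lemma \ref{lem1}. The remaining inequality $(\sin x/x)^2 + x/\tanh x > 2$ is simply the rightmost bound of Theorem \ref{thm1}, which in fact delivers the sharper statement $> 2 + x^4/180$.

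Since every step is either a verbatim quotation of a previously proved bound or a one-line positivity/monotonicity observation, there is no genuine obstacle. The only point requiring a little care is the passage from the squared-term comparison to the base comparison, which is legitimate precisely because all the quantities involved are positive on $(0,\pi/2)$; with that noted, the proof reduces to citing Lemma \ref{lem1} together with Theorems \ref{thm1} and \ref{thm2}.
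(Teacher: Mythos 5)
Your proposal is correct and follows essentially the same route as the paper, which simply declares the corollary an easy consequence of Theorems \ref{thm1} and \ref{thm2}; your write-up merely makes explicit the step the paper leaves implicit, namely that the leftmost comparisons reduce via the common summand to $\sin x/x < x/\sinh x$ from Lemma \ref{lem1}.
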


As the applications of Theorems \ref{thm1} and \ref{thm2}, we have in fact obtained/refined the inequalities (\ref{eqn1.1})-(\ref{eqn1.4}) in Corollaries \ref{cor1} and \ref{cor2}. 

\vspace{.5cm}

\noindent{\bf Proof of Theorem \ref{thm3}.}
We first prove the inequalities in (\ref{eqn3.9}). For the leftmost inequality, we have by (\ref{eqn2.2}) and (\ref{eqn2.4}) that
\begin{align*}
2 \frac{\sinh x}{x} + \frac{x}{\tan x}  &= \sum_{k=0}^{\infty} \frac{2}{(2k+1)!}x^{2k} + 1 - \sum_{k=1}^{\infty} \frac{2^{2k} \vert B_{2k} \vert}{(2k)!} x^{2k} \\
&= 3 + \sum_{k=1}^{\infty} \frac{2}{(2k+1)!}x^{2k} - \sum_{k=1}^{\infty}\frac{2^{2k} \vert B_{2k} \vert}{(2k)!} x^{2k} \\
&= 3 + \sum_{k=2}^{\infty} \frac{2}{(2k)!}\left(\frac{1}{2k+1} - 2^{2k-1} \vert B_{2k} \vert \right)x^{2k} \\
&< 3 - \frac{x^4}{180} < 3
\end{align*}
by Lemma \ref{lem5}.

For the rightmost inequality of (\ref{eqn3.9}) we use (\ref{eqn2.31}) and (\ref{eqn2.6}) to get
\begin{align*}
2 \frac{x}{\sinh x} + \frac{\tan x}{x} &=   2 - \sum_{k=1}^{\infty} \frac{2^2(2^{2k-1}-1)}{(2k)!} B_{2k} x^{2k} + \sum_{k=1}^{\infty} \frac{2^{2k}(2^{2k}-1)}{(2k)!}\vert B_{2k} \vert x^{2k-2} \\
&= 2 - \sum_{k=1}^{\infty} \frac{2^2(2^{2k-1}-1)}{(2k)!} B_{2k} x^{2k} + \sum_{k=0}^{\infty} \frac{2^{2k+2}(2^{2k+2}-1)}{(2k+2)!}\vert B_{2k+2} \vert x^{2k} \\
&= 3 + \sum_{k=2}^{\infty} \frac{2^2}{(2k)!}\left[\frac{2^{2k-1}(2^{2k+2}-1)}{(k+1)(2k+1)}\vert B_{2k+2} \vert - (-1)^{k+1}(2^{2k-1}-1)\vert B_{2k} \vert \right]x^{2k} \\
&= 3 + \left( \sum_{k=2}^{\infty} c_k x^{2k} := C(x) \right),
\end{align*}
where $ c_k = \frac{2^2}{(2k)!}\left[\frac{2^{2k-1}(2^{2k+2}-1)}{(k+1)(2k+1)}\vert B_{2k+2} \vert - (-1)^{k+1}(2^{2k-1}-1)\vert B_{2k} \vert \right] \ \ (k \geq 2). $ Now $ c_k > 0 $ for $ k = 2, 4, 6, \cdots $ and  $$ c_k = \frac{2^2}{(2k)!}\left[\frac{2^{2k-1}(2^{2k+2}-1)}{(k+1)(2k+1)}\vert B_{2k+2} \vert - (2^{2k-1}-1)\vert B_{2k} \vert \right] > 0 \ \ \text{for} \ \ k = 3, 5, 7, \cdots $$ by Lemma \ref{lem7}. Thus all the terms of $ C(x) $ are positive and hence by truncating $ C(x) $ we get 
 $$ 2 \frac{x}{\sinh x} + \frac{\tan x}{x} > 3 + c_2 x^4 > 3 \ \ \text{where} \ \ c_2 = \frac{31}{180}. $$ 
 The inequalities (\ref{eqn3.10}) can be proved easily by applying similar arguments. 
$\hfill\square$

\vspace{.5cm}

\noindent{\bf Proof of Theorem \ref{thm4}.}
We give proof of the inequalities in (\ref{eqn3.11}) only. The inequalities in (\ref{eqn3.12}) will follow similarly. 
From (\ref{eqn2.2}) and (\ref{eqn2.5}) we have
\begin{align*}
2 \frac{\sin x}{x} + \frac{x}{\tanh x} &= 2 \sum_{k=0}^{\infty} \frac{(-1)^k}{(2k+1)!}x^{2k} + 1 + \sum_{k=1}^{\infty}\frac{2^{2k}  B_{2k} }{(2k)!} x^{2k} \\
&= 3 + \sum_{k=1}^{\infty} \frac{2(-1)^k}{(2k+1)!}x^{2k} + \sum_{k=1}^{\infty}\frac{2^{2k} \vert B_{2k} \vert}{(2k)!}x^{2k} \\
&= 3 + \sum_{k=1}^{\infty}\frac{2}{(2k)!}\left[\frac{(-1)^k}{2k+1} - (-1)^k 2^{2k-1} \vert B_{2k} \vert \right]x^{2k} \\
&= 3 +  \sum_{k=2}^{\infty}\frac{2(-1)^k}{(2k)!}\left[\frac{1}{2k+1} - 2^{2k-1} \vert B_{2k} \vert \right]x^{2k} \\
&= 3 + \left( \sum_{k=2}^{\infty}d_k x^{2k} := D(x) \right),
\end{align*}
where $ d_k = \frac{2(-1)^k}{(2k)!}\left[\frac{1}{2k+1} - 2^{2k-1} \vert B_{2k} \vert \right] \ (k \geq 2). $ By Lemma \ref{lem5}, the series $ D(x) $ is an alternating convergent series and its first term is negative. Consequently $$ 3 + d_2 x^4 < 2 \frac{\sin x}{x} + \frac{x}{\tanh x} < 3 $$ where $ d_2 = -\frac{1}{180}. $ This gives leftmost double inequality of (\ref{eqn3.11}).

For the rightmost double inequality of (\ref{eqn3.11}), we make use of (\ref{eqn2.3}) and (\ref{eqn2.7}) to obtain
\begin{align*}
2 \frac{x}{\sin x} + \frac{\tanh x}{x} &= 2 + \sum_{k=1}^{\infty} \frac{2^2(2^{2k-1}-1)}{(2k)!}\vert B_{2k} \vert x^{2k} + \sum_{k=1}^{\infty} \frac{2^{2k}(2^{2k}-1)}{(2k)!} B_{2k} x^{2k-2} \\
&= 2 + \sum_{k=1}^{\infty} \frac{2^2(2^{2k-1}-1)}{(2k)!}\vert B_{2k} \vert x^{2k} + \sum_{k=1}^{\infty} \frac{2^{2k+2}(2^{2k+2}-1)}{(2k+2)!} B_{2k+2} x^{2k} \\
&= 3 + \sum_{k=2}^{\infty} \frac{2^2}{(2k)!} \left[ (2^{2k-1} - 1) \vert B_{2k} \vert + (-1)^k \frac{2^{2k-1}(2^{2k+2}-1)}{(k+1)(2k+1)}\vert B_{2k+2} \vert \right]x^{2k} \\
&= 3 + \left( \sum_{k=2}^{\infty} e_k x^{2k} = E(x) \right),
\end{align*}
where $ e_k = \frac{2^2}{(2k)!} \left[ (2^{2k-1} - 1) \vert B_{2k} \vert + (-1)^k \frac{2^{2k-1}(2^{2k+2}-1)}{(k+1)(2k+1)}\vert B_{2k+2} \vert \right] \ (k \geq 2). $ Clearly $ e_k > 0 $ for $ k = 2, 4, 6, \cdots $ and $ e_k < 0 $ for $ k = 3, 5, 7, \cdots $ by Lemma \ref{lem7}. Thus $ E(x) $ is an alternating convergent series whose first term is positive, i.e. $ \frac{31}{180}x^4.$ The desired inequality follows by the truncation of $ E(x). $ This ends the proof.
$\hfill\square$

\begin{remark}
If $ n $ is even positive integer and $ 0 < x < \pi/2 $ then
\begin{align}\nonumber
3 + \sum_{k=2}^{n} d_k x^{2k} < 2 \frac{\sin x}{x} + \frac{x}{\tanh x} < 3 + \sum_{k=2}^{n+1} d_k x^{2k},
\end{align}
and
\begin{align}\nonumber
3 + \sum_{k=2}^{n+1}e_k x^{2k} < 2 \frac{x}{\sin x} + \frac{\tanh x}{x} < 3 + \sum_{k=2}^{n}e_k x^{2k},
\end{align}
where $ d_k $ and $ e_k $ are as defined in the proof of Theorem \ref{thm4}.
\end{remark}

Before concluding this section, we must emphasize that the Huygens-type inequalities in Theorems \ref{thm3} and \ref{thm4} give sharp bounds to the much-discussed functions in the literature such as $ \sin(x)/x, x/\tan(x) $ etc. For example, the inequalities
\begin{align}\label{eqn3.15}
\frac{\sin x}{x} < \frac{1}{2}\left(3 - \frac{x}{\tanh x}\right) \ \ (0 < x < \pi/2),
\end{align}
and
\begin{align}\label{eqn3.16}
\frac{\sinh x}{x} < \frac{1}{2}\left(3 - \frac{x}{\tan x}\right) \ \ (0 < x < \pi/2)
\end{align}
are very sharp and interesting for further studies.

\end{document}